\sloppy \pagestyle{plain} \textwidth=13.5cm \textheight=21cm
\makeatletter \@addtoreset{equation}{section} \makeatother
\newcommand{\down}[1]{\left\lfloor #1\right\rfloor}
\newcommand{\muu}{{\boldsymbol{\mu}}}
\newcommand{\alphaa}{{\boldsymbol{\alpha}}}
\newcommand{\CC}{{\mathbb C}}
\newcommand{\QQ}{{\mathbb Q}}
\newcommand{\OOO}{{\mathscr{O}}} 
\newcommand{\ord}{\operatorname{ord}}
\newcommand{\ct}{\operatorname{ct}}
\newcommand{\Tc}{{\mathcal T}^{\mathrm{can}}}
\newcommand{\wt}{\operatorname{wt}}
\newtheorem{theorem}[equation]{Theorem}
\newtheorem{lemma}[equation]{Lemma}
\newtheorem{conjecture}[equation]{Conjecture}
\theoremstyle{definition}
\newtheorem{definition}[equation]{Definition}
\newtheorem{example}[equation]{Example}
\newtheorem{case}[equation]{}
\author{Yuri Prokhorov}
\thanks{
The author was
partially supported by 
RFBR, Nos. \ 08-01-00395-a and 06-01-72017-MHTI-a.
}
\address{
Yuri Prokhorov, Department of Higher Algebra, 
Faculty of Mathematics and Mechanics, 
Moscow State Lomonosov University, Vorobievy Gory, Moscow, 
119 899, RUSSIA}
\email{prokhoro@mech.math.msu.su} 
\title{Gap conjecture for $3$-dimensional canonical 
thresholds}
\subjclass{14B05, 14J17, 14E30, 32S25}
\date{}
\begin{document}
\maketitle
\begin{abstract}
We prove that the interval $(5/6,\, 1)$ contains no $3$-dimensional canonical 
thresholds.
\end{abstract}

\section{Introduction}

Let $(X\ni P)$ be a three-dimensional 
canonical singularity 
and let $S\subset X$ be a $\QQ$-Cartier divisor.
The \emph{canonical threshold} of the pair $(X,S)$ is 
\[
\ct(X,S):=\sup \{ c \mid \text{the pair $(X,cS)$ is canonical}\}.
\]
It is easy to see that $\ct(X,S)$ is rational and non-negative. 
Moreover, if $S$ is effective and integral, then $\ct(X,S)\in [0,\, 1]$.
Define the subset $\Tc_n\subset [0,\, 1]$ as follows
\[
\Tc_n:=\{\ct(X,S) \mid \text{$\dim X=n$, $S$ is 
integral and effective} 
\}.
\]

The following conjecture is an analog of 
corresponding conjectures for log canonical 
thresholds and minimal discrepancies, see \cite{Shokurov1988p},
\cite{Utah}, \cite{Kollar-1995-pairs}, \cite{Prokhorov-McKernan-2004}, \cite{kollar-2008}.

\begin{conjecture}
\label{conj-acc}
The set $\Tc_n$ satisfies the ascending chain condition.
\end{conjecture}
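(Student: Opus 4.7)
The plan is to follow the extract--rescale--bound--limit template that has become standard for ACC-type statements about log canonical thresholds and minimal discrepancies. Suppose, for contradiction, that there is a strictly increasing sequence
\[
c_1<c_2<\cdots, \qquad c_i=\ct(X_i,S_i)\in \Tc_n, \qquad c_i\to c\leq 1.
\]
For each $i$ choose an exceptional prime divisor $E_i$ over $X_i$ that computes the canonical threshold, so that $a(E_i,X_i)=c_i\cdot \mult_{E_i}S_i$ while $a(E,X_i)\geq c_i\cdot \mult_E S_i$ for every other exceptional $E$; equivalently, $E_i$ is a maximally singular place of the klt pair $(X_i,c_iS_i)$.

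Next I would extract each $E_i$ by a $\QQ$-factorial divisorial contraction $f_i\colon Y_i\to X_i$, whose existence is guaranteed by running a minimal model program on a log resolution of $(X_i,c_iS_i)$ in the style of Birkar--Cascini--Hacon--McKernan. Writing $\tilde S_i$ for the birational transform of $S_i$, the identity $f_i^*(K_{X_i}+c_iS_i)=K_{Y_i}+c_i\tilde S_i$ shows that $-E_i$ is $f_i$-ample and that $(Y_i,c_i\tilde S_i)$ is canonical with $a(E_i,Y_i,c_i\tilde S_i)=0$. Adjunction then equips the central fibre $E_i$ with a boundary $\Delta_i$ making $(E_i,\Delta_i)$ a log Fano pair over the residue field of $P_i$, degenerating to a log Calabi--Yau pair when $c_i\to 1$. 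The crucial claim is that, as $i$ varies, the pairs $(E_i,\Delta_i)$ lie in a bounded family depending only on $n$, with coefficients in a DCC subset of $[0,1]\cap \QQ$.

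Granting this boundedness, the Cartier indices of $K_{E_i}+\Delta_i$ are uniformly bounded, so the local invariants $a(E_i,X_i)$ and $\mult_{E_i}S_i$ take values in a discrete subset of $\QQ$; consequently the ratios $c_i$ lie in a discrete set. Combining this with the constructibility of canonical thresholds in flat families, via a generic-limit argument in the style of de~Fernex--Ein--Musta\c{t}\u{a} and Koll\'ar, the sequence $\{c_i\}$ must become stationary, contradicting strict monotonicity.

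The main obstacle is the boundedness step. Unlike for log canonical thresholds, where the extracted divisor has boundary coefficient exactly $1$ and one invokes the ACC for log canonical pairs of Hacon--McKernan--Xu directly, here the relevant coefficient $a(E_i,X_i)$ can be any non-negative rational, and one must simultaneously control it and the Cartier indices of the ambient germs $(X_i\ni P_i)$. In dimension three a promising route is the explicit classification of divisorial contractions to points (Kawakita, Hayakawa, Kawamata) combined with boundedness of complements for log del~Pezzo surfaces (Shokurov, Alexeev); in arbitrary dimension one would need a BAB-type statement for weak log Fano pairs with DCC coefficients. The gap theorem proved in the present paper should be seen as a first instance of the above strategy localised near the accumulation value $c=1$, where the extracted pair is of Calabi--Yau type and boundedness is most tractable.
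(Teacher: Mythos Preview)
The paper does not prove this statement: it is stated as an open conjecture (Conjecture~\ref{conj-acc}), and the paper only establishes the much weaker gap statement $\epsilon_3^{\mathrm{can}}=1/6$ (Theorems~\ref{main-1} and~\ref{main-2}). There is therefore no proof in the paper to compare your proposal against.

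Your argument is in any case not a proof but a programme with an acknowledged hole. The boundedness of the extracted pairs $(E_i,\Delta_i)$, together with the uniform control on the discrepancies $a(E_i,X_i)$ and Cartier indices that you need to force the $c_i$ into a discrete set, is essentially the content of the conjecture itself; you defer it to a BAB-type input that is not available in the generality required. The generic-limit machinery you invoke is tailored to \emph{log canonical} thresholds, where lower semicontinuity in families holds; the analogous semicontinuity for canonical thresholds is more delicate and does not follow from the references you cite. So the proposal does not close to a proof even in dimension~$3$.

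Your last sentence also mischaracterises the paper. The proof of Theorem~\ref{main-2} is not an instance of the extract--bound--limit template: it is an explicit case analysis based on the classification of three-dimensional terminal singularities, computing discrepancies via concrete weighted blowups of $\CC^3$, hypersurfaces in $\CC^4$, and their cyclic quotients. No boundedness of log Fano pairs, no generic limits, and no MMP extractions beyond single weighted blowups appear anywhere in the argument.
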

The conjecture is interesting for applications to 
birational geometry, see, e.g., \cite{Corti1995}.
It was shown in \cite{Birkar-Shokurov} that much more general form of \ref{conj-acc}
follows from ACC for minimal log discrepancies 
and weak Borisov-Alexeev conjecture.
The important particular case of \ref{conj-acc} is the following
\begin{conjecture}[cf. {\cite{kollar-2008}}]
\label{conj-gap}
$\epsilon_n^{\mathrm{can}}:=1-\sup (\Tc_n\setminus \{1\})>0$.
\end{conjecture}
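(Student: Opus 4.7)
The plan is to establish the $n=3$ case of Conjecture~\ref{conj-gap} with the explicit gap $\epsilon_3^{\mathrm{can}}\geq 1/6$, that is, $\Tc_3\cap(5/6,\, 1)=\varnothing$. Suppose for contradiction $c=\ct(X,S)\in(5/6,\, 1)$. The infimum in $\ct(X,S)=\inf_E a(E;X)/\mult_E S$ is attained at some prime divisor $E$ over $X$ with center $P$, and by MMP-type arguments it can be realised as the exceptional divisor of a divisorial contraction $f\colon Y\to X$ with $Y$ terminal. Writing $a=a(E;X)$ and $s=\mult_E S$, the hypothesis becomes $a<s<6a/5$.

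The second step invokes the classification of three-fold divisorial contractions. When $X\ni P$ is smooth, Kawakita's theorem identifies $f$ with a weighted blowup of coprime weights $(w_1,w_2,w_3)$; then $a=w_1+w_2+w_3-1$ and $s=\wt_{(w_1,w_2,w_3)}(g)$ are positive integers, where $g$ is a local equation of $S$. The condition $a<s<6a/5$ forces $a\geq 6$ and leaves only a handful of admissible values of $s$ for each $a$. For higher-index terminal points one uses the Kawamata--Hayakawa--Kawakita classification; the $1/r$-denominators entering the discrepancy narrow the admissible window even further.

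For each admissible numerical datum the strategy is to exhibit a competing divisorial valuation $E'$ over $X$ with ratio $a(E';X)/\mult_{E'} S\leq 5/6$, contradicting the minimality of $E$. Natural candidates are the ordinary blowup (which already forces $\mult_P S\leq 2$, so $S$ has at most a double point at $P$) and a weighted blowup with weights dictated by the low-order part of $g$; the example $\{x^2+y^3+z^7=0\}$, whose weights $(3,2,1)$ give ratio exactly $5/6$, both confirms the sharpness of the bound and suggests the correct competitor in general. The principal obstacle will be the combinatorial step of ruling out every Newton-polyhedron configuration of $g$ for which no competing weighted blowup beats $c$; after the $\mult_P S\leq 2$ reduction this should become a finite, if delicate, case check, with the technical heart being the analysis of cD-type (and higher) singular local equations. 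For general $n$ a further inductive or global input (e.g.\ ACC for lower-dimensional lc thresholds or a Borisov--Alexeev-type ingredient) seems necessary, and lies beyond the local approach above.
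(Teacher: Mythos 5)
Your outline points in a reasonable direction and correctly identifies the decisive example ($x^2+y^3+z^d$ with weight $(3,2,1)$, ratio $5/6$) and the first reduction (the ordinary blowup forces $\mult_P S\le 2$, which is exactly the paper's Case $\ord_0\psi\ge 3$ in Lemma \ref{l-G-1}). But as written this is a plan, not a proof, and the gap is precisely where the paper's content lies: the case-by-case verification that for every local equation of a non-Du Val $(S\ni P)$, and for every type of terminal point $(X\ni P)$, some explicit weighted blowup produces a valuation with ratio $\le 5/6$ (resp. $\le 4/5$). You defer this entirely (``should become a finite, if delicate, case check''), yet it is not automatic: in the $cDV$ case the paper has to chase through several rounds of weight choices ($(1,1,1,2)$, then $(2,2,2,3)$; $(2,1,1,3)$, then $(2,1,1,2)$, then $(3,1,1,2)$; $(3,2,1,5)$), each time using the terminality criterion and admissibility to rule out degenerate configurations of the equation $\phi=\eta+t\zeta$. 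Moreover, the non-Gorenstein case is essentially absent from your proposal: the paper does not classify divisorial contractions there but instead passes to the index-one cover, shows by a semi-invariant weight computation that $\ct>1/2$ forces $K_X+S\sim 0$ (Lemma \ref{l-nG-2}), and then derives a contradiction with $(S\ni P)$ not being Du Val (Lemma \ref{l-nG-3}). That argument has no counterpart in your sketch.

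A secondary point: your appeal to the Kawakita--Hayakawa classification of divisorial contractions (to identify the \emph{extremal} valuation as a weighted blowup) is a much heavier input than the problem requires, and the paper never uses it. The paper only needs the classification of terminal singularities together with the existence of admissible weighted blowups of small discrepancy (Markushevich, Kawamata); the contradiction comes from exhibiting \emph{some} valuation violating $a(E,K_X)-c\,v_E(S)\ge 0$, i.e.\ from canonicity of $(X,cS)$ itself, so there is no need to control the extremal divisor. (Relatedly, the contradiction you describe is with canonicity of the pair, not with ``minimality of $E$''; the conclusion is the same, but the divisorial-contraction step is then doing no work.) To turn your proposal into a proof you would either have to carry out the Newton-polyhedron case analysis in full, including the index $r>1$ case, or supply the index-one-cover argument of Section 4.
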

\noindent
The aim of this note is to prove Conjecture \ref{conj-gap} for $n=3$ in a 
precise form:

\begin{theorem}
\label{main-1}
$\epsilon^{\mathrm{can}}_3=1/6$.
\end{theorem}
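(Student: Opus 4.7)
The plan is to prove both inequalities defining $\epsilon_3^{\mathrm{can}}=1/6$: an explicit example shows $\epsilon_3^{\mathrm{can}}\le 1/6$, and a contradiction argument based on the classification of three-dimensional divisorial contractions shows $\epsilon_3^{\mathrm{can}}\ge 1/6$.

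For the upper bound, take $X=\mathbb{A}^3$ and $S=\{x^2+y^3+z^6=0\}$, so that $S$ has a simple elliptic singularity of type $\widetilde{E}_8$ at the origin. The weighted blow-up $f\colon Y\to X$ with weights $(3,2,1)$ extracts an irreducible divisor $E$ with $a_E(X)=3+2+1-1=5$ and $\mult_E(S)=\min(6,6,6)=6$, so $\ct(X,S)\le 5/6$. The model $Y$ has only terminal cyclic quotient singularities, and the strict transform $S_Y$ is smooth near the exceptional locus, so a standard discrepancy computation on any further log resolution verifies that $E$ is the minimising valuation; hence $\ct(X,S)=5/6$.

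For the lower bound, suppose for contradiction that $c:=\ct(X,S)\in(5/6,1)$. Passing to a crepant terminalisation (which leaves $c$ unchanged, since $\ct$ is computed by divisorial valuations and the terminalisation is crepant), I may assume $X$ is itself terminal. Since $(X,cS)$ is then strictly canonical and klt, standard MMP arguments produce a projective birational morphism $f\colon Y\to X$ extracting a single exceptional divisor $E$ over $P$, with $Y$ $\QQ$-factorial terminal, satisfying $K_Y+cS_Y=f^*(K_X+cS)$ where $S_Y:=f^{-1}_*S$; equivalently,
\[
a_E(X)/\mult_E(S)=c\in(5/6,1).
\]
I would then invoke the classification of three-dimensional divisorial contractions to a point (due in instalments to Mori, Koll\'ar--Mori, Kawamata, Hayakawa, Kawakita, Corti, and others): such $f$ is a weighted blow-up in suitable, possibly quotient, local coordinates, with admissible weight systems tabulated by the analytic type of $(X\ni P)$---smooth, cDV, cyclic quotient, $cA/n$, $cD/n$, and so on. For each admissible entry, $a_E$ and $\mult_E(S)$ are explicit in the weights and in the Newton-polyhedral data of $S$, and one checks case by case that the ratio is either $\le 5/6$ or equal to $1$.

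The main obstacle is the uniform execution of this case analysis. The non-Gorenstein cases are the most delicate, because index-divisibility constraints on the weights must be balanced against the hypothesis that $S$ is an \emph{integral} $\QQ$-Cartier divisor---a hypothesis without which thresholds accumulate at $1$ trivially. The sharp value $5/6$ is forced by the smooth-point case where $\mult_P(S)=2$ and the tangent cone of $S$ has rank one, the extremal instance being realised by the example $\{x^2+y^3+z^6=0\}$ constructed above.
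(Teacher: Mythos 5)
Your upper-bound half is correct and coincides with the paper's own example: the $(3,2,1)$-blow-up of $S=\{x^2+y^3+z^6=0\}\subset\CC^3$ gives $a(E,K_X)=5$ and $v_E(S)=6$, hence $\ct(X,S)\le 5/6$, and equality follows from the crepant relation $K_Y+\frac56S_Y=f^*(K_X+\frac56S)$ once one checks in the three charts that $(Y,\frac56S_Y)$ is canonical. Your ``standard discrepancy computation on any further log resolution'' is an unexecuted but correct version of that chart check, so $\epsilon_3^{\mathrm{can}}\le 1/6$ is fine.

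The lower bound, which is the substance of the theorem, is not proved. Two concrete gaps. First, the reduction to a single divisorial contraction $f\colon Y\to X$ with $Y$ \emph{terminal} extracting one crepant divisor of $(X,cS)$ is not a routine MMP fact: extracting a divisor $E$ with $a(E,K_X+cS)=0$ does yield a $\QQ$-factorial $Y$ with $(Y,cS_Y)$ canonical, but any other crepant divisor of $(X,cS)$ whose centre on $Y$ lies in $E\setminus S_Y$ then has discrepancy $0$ over $K_Y$, so $Y$ need not be terminal and $f$ need not be a Mori-category contraction to which the classification applies; you would also have to dispose separately of valuations whose centre on $X$ is a curve. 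Second, and decisively, the case-by-case verification that $a_E(X)/v_E(S)$ avoids $(5/6,1)$ over all entries of the classification of threefold divisorial contractions, for all integral $S$, is simply asserted --- you yourself call it ``the main obstacle'' --- and that verification \emph{is} the theorem. The paper takes a much lighter and self-contained route: it never invokes the classification of contractions, but puts the equations of $X$ and $S$ in normal form (smooth point; cDV point via Markushevich's admissible weights; index $r>1$ via the index-one cover, Kawamata's $\frac1r(a,r-a,1)$-blow-up, and the observation that $\ct>1/2$ forces $K_X+S\sim 0$, whence $S$ would be Du Val) and exhibits for each normal form one explicit weighted blow-up giving $c\le 5/6$, respectively $\le 4/5$ at a singular point. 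Your route has in fact been carried out in later literature, but as written the proposal is a programme, not a proof.
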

An analog of this theorem for log canonical thresholds was
proved by J. Koll\'ar \cite{Kollar1994}: $\epsilon^{\mathrm{lc}}_3=1/42$.

Note that replacing $(X\ni P)$ with its terminal $\QQ$-factorial
modification we may assume that $(X\ni P)$ is terminal.
Thus the following is a stronger form of Theorem \ref{main-1}:

\begin{theorem}
\label{main-2}
Let $(X\ni P)$ be a three-dimensional terminal singularity 
and let $S\subset X$ be an \textup(integral\textup) 
effective Weil $\QQ$-Cartier divisor
such that the pair $(X,S)$ is not canonical.
Then $\ct(X,S)\le 5/6$. 
Moreover, if $(X\ni P)$ is singular, then $\ct(X,S)\le 4/5$. 
\end{theorem}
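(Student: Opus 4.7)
The plan is to argue locally at $P$ using the Reid--Mori classification of three-dimensional terminal singularities, and in each case exhibit an explicit divisorial valuation that realises the claimed upper bound on $\ct(X,S)$.

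\textbf{Smooth case.} Write $S=\{f=0\}$ in analytic coordinates $(x,y,z)$ at $P$. The pair $(X,S)$ being non-canonical forces $S$ to have a hypersurface singularity worse than Du Val at $P$. If $\mult_P f\ge 3$, the ordinary blow-up of $P$ extracts a divisor $E$ with $a(E,X)=2$ and $\ord_E S\ge 3$, giving $\ct(X,S)\le 2/3$. Otherwise $\mult_P f=2$, and after completing the square one may bring $f$ to the form $x^{2}+g(y,z)+O(4)$; since $g$ must lie beyond the $E_8$-list, the key case is $f=x^{2}+y^{3}+z^{6}+\cdots$ ($J_{10}$-type). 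The weighted blow-up with weights $(3,2,1)$ then produces an exceptional divisor $E$ with $a(E,X)=5$ and $\ord_E S=6$, so $\ct(X,S)\le 5/6$. The remaining boundary cases ($x^{2}+y^{2}z+z^{m}$, etc.) are handled by analogous weighted blow-ups giving strictly smaller ratios, and the extremal value $5/6$ is saturated precisely by the $J_{10}$ family.

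\textbf{Singular case.} Now $(X\ni P)$ is a singular terminal point of Gorenstein index $r\ge 2$. By Reid's classification, $(X\ni P)$ is a cyclic hyperquotient $(\{\varphi=0\}\subset(\CC^{4},0))/\boldsymbol{\mu}_r$ with explicit semi-invariant weights. The economic (Kawamata) blow-up of $P$ extracts an exceptional divisor $E_{0}$ with $a(E_{0},X)=1/r$. Since $S$ is a $\QQ$-Cartier Weil divisor, it lifts to the zero locus of a $\boldsymbol{\mu}_r$-semi-invariant $f$, and $\ord_{E_{0}}S=w(f)/r$ for the corresponding weighted order. The non-canonicity hypothesis forces $w(f)\ge 2$, and working through the families $cA/r$, $cAx/r$, $cD/r$, $cE/r$ with auxiliary weighted blow-ups (and sometimes a composition of Kawamata and weighted extractions) yields $\ct(X,S)\le 4/5$, the extremal bound being realised by an explicit index-$5$ example such as a suitable Weil divisor on a singularity of type $\tfrac{1}{5}(2,3,1)$.

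\textbf{Main obstacle.} The main technical difficulty is the exhaustive case analysis through Reid's list: in each family one must pick the right weighted or Kawamata blow-up so that the non-canonicity hypothesis is used in the correct way (rather than just the singularity of $X$ itself), and in several sub-cases it is necessary to compose two successive extractions to uncover the divisor achieving the bound. Pinning down the sharp constants $5/6$ and $4/5$ — as opposed to merely obtaining some gap — and matching them with explicit extremal singularities is the heart of the argument.
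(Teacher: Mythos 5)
Your smooth case follows the paper's Lemma~\ref{l-G-1} essentially step for step (ordinary blowup when $\mult_P f\ge 3$, then the $(2,1,1)$ and $(3,2,1)$ weighted blowups for the remaining non--Du Val normal forms) and is fine. The singular half, however, has a genuine gap in two places. First, your case division is incomplete: a singular terminal point can have Gorenstein index one (an isolated cDV hypersurface point in $\CC^4$), and this case --- which is the technical core of the paper, its Lemma~\ref{l-G-2} --- is skipped entirely when you pass directly to ``Gorenstein index $r\ge 2$''. There the paper first invokes Markushevich's admissible weighted blowup with discrepancy $1$ to force $v_{\alphaa}(\psi)=1$, hence $\ord_0\psi=1$, so that after a coordinate change $S$ is cut out of $X=\{\eta(x,y,z)+t\zeta=0\}$ by $t=0$ and is itself the hypersurface $\{\eta=0\}\subset\CC^3$; only then does a case analysis on $\eta$ with further weighted blowups of $\CC^4$ give $c\le 4/5$. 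None of this is in your proposal.

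Second, in the index $r\ge2$ case your pivotal claim that ``the non-canonicity hypothesis forces $w(f)\ge 2$'' is backwards, and as written it would make your subsequent case analysis pointless: if $\ord_{E_0}S=w(f)/r\ge 2/r$, then $\ct(X,S)\le (1/r)/(2/r)=1/2$ and you are already done. The genuinely hard case is $w(f)=1$, where the Kawamata blowup yields no bound at all. The paper resolves it without trawling through the $cA/r$, $cAx/4$, etc.\ lists: from $rv_{\alphaa}(\psi)=1$ and the semi-invariant weights of the Kawamata extraction it deduces $K_X+S\sim 0$ (Lemma~\ref{l-nG-2}), so $K_S$ is Cartier; passing to the index-one cover and applying the already-established Gorenstein bounds (Lemmas~\ref{l-G-1} and \ref{l-G-2}) shows $(X^\sharp,S^\sharp)$ is canonical, so $S^\sharp$ is Du Val, so $S=S^\sharp/\muu_r$ is log terminal with Cartier canonical class, hence Du Val --- contradicting the standing fact that $(S\ni P)$ is not Du Val. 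Your proposed direct case analysis through Reid's families is a legitimate alternative in principle, but it is not carried out, and without either that computation or the covering argument the bound $4/5$ is not established.
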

The proof is rather standard.
We use the classification of terminal singularities and 
weighted blowups techniques, cf. \cite{Kawamata-1992-discr}, \cite{Kollar1994},
\cite{Markushevich-1996-discr}.

\section{Preliminaries}

\begin{case} \textbf{Notation.}
For a polynomial $\phi$, 
$\ord_0 \phi$ denotes the order of vanishing of 
$\phi$ at $0$ and
$\phi_d$ is the homogeneous 
component of degree $d$.

Throughout this paper we let $(X\ni P)$ be the germ of 
a three-dimensional terminal singularity and let 
$S\subset X$ be an 
effective Weil $\QQ$-Cartier divisor
such that the pair $(X,S)$ is not canonical.
Put $c:=\ct(X,S) >0$. Since $(X,S)$ is not canonical, $c<1$.

We work over the complex number field $\CC$.
\end{case}

\begin{lemma}
In the above notation the singularity $(S\ni P)$ is not Du Val.
\end{lemma}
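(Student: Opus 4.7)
My plan is to argue by contradiction: suppose $(S\ni P)$ is Du Val, so in particular it has canonical surface singularities, and show that $(X,S)$ is then canonical, contradicting the standing hypothesis. Choose a log resolution $f\colon Y\to X$ of the pair $(X,S)$ so that the strict transform $\tilde S$ is smooth and each exceptional divisor $E_i$ with center $P$ meets $\tilde S$ transversally along a single smooth curve, with distinct $E_i$'s producing distinct curves. Write as usual $K_Y=f^*K_X+\sum a_iE_i$ and $f^*S=\tilde S+\sum m_iE_i$, and set $\alpha_i:=a_i-m_i=a(E_i,X,S)$. By assumption some $\alpha_i<0$; the goal is to rule this out.

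The first key step is that the ``different'' appearing in adjunction vanishes as a divisor. Since $(X\ni P)$ is a three-dimensional terminal singularity, its singular locus has codimension $\geq 3$ in $X$, so $X$ is smooth at the generic point of every curve $C\subset S$; at such a smooth point of $X$ the $\QQ$-Cartier divisor $S$ is automatically Cartier. Hence every locus contributing to $\mathrm{Diff}_S$ lies in codimension $\geq 2$ in $S$, and adjunction simplifies to the clean identity $(K_X+S)|_S=K_S$.

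Restricting the standard formula $K_Y+\tilde S=f^*(K_X+S)+\sum\alpha_iE_i$ to $\tilde S$ and using the vanishing of the different yields
\[
K_{\tilde S}-\nu^*K_S=\sum_i\alpha_i\,\bigl(E_i|_{\tilde S}\bigr),
\]
where $\nu:=f|_{\tilde S}\colon\tilde S\to S$. Because $(S\ni P)$ is Du Val, hence canonical as a surface, the left-hand side is a non-negative combination of the $\nu$-exceptional prime curves on $\tilde S$. By the transversality arranged above, the right-hand side is an integer combination of distinct prime curves. Matching coefficients forces $\alpha_i\geq 0$ whenever $E_i$ meets $\tilde S$, contradicting the existence of a problematic $E$ and completing the argument.

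The main obstacle is the resolution engineering in the first paragraph: one must guarantee that \emph{every} $E_i$ realizing a negative discrepancy actually meets $\tilde S$, and meets it transversally in a single smooth curve. Exceptional divisors that ``float'' over $P$ without touching $\tilde S$ contribute the zero term on the right and give no information on their $\alpha_i$; in such cases one has to perform further blowups to bring them into contact with $\tilde S$ without destroying the transversality already achieved on the remaining components. This is routine via Hironaka resolution but requires bookkeeping; a cleaner alternative is to replace the global resolution argument by a valuation-by-valuation analysis, extracting each candidate $E$ separately on a model in which $\tilde S$ is still smooth and the above coefficient comparison is literal.
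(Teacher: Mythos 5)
Your strategy---prove the contrapositive by restricting the discrepancy identity to the strict transform $\tilde S$ and comparing with the discrepancies of the Du Val surface $S$---is an attempt to prove from scratch the hard direction of inversion of adjunction, whereas the paper simply quotes the statement as a known theorem \cite[Th.~2.6]{Reid-1980can}. The part of your argument covering the ``good case'' (every $E_i$ with $\alpha_i<0$ meets $\tilde S$ transversally in a curve) is essentially sound, and the vanishing of the different does follow from the fact that a terminal threefold has isolated singularities. But the case you defer to your final paragraph is not bookkeeping: it is the entire mathematical content of the lemma. An exceptional divisor $E_i$ with $\alpha_i<0$ and $E_i\cap\tilde S=\emptyset$ contributes the zero divisor to the restricted identity and is invisible to your coefficient comparison, and the remedy you propose---``perform further blowups to bring them into contact with $\tilde S$''---cannot work even in principle: blowing up only separates subvarieties, so if $E_i$ and $\tilde S$ are disjoint on $Y$, their strict transforms remain disjoint on every higher model.

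The alternative you sketch (extract the offending valuation $E$ by itself) is closer to a correct proof, since on a model whose only exceptional divisor lies over $P$ the fiber over $P$ is contained in that divisor and must meet $\tilde S$ by properness of $\tilde S\to S$; but such a model is produced by the relative MMP, is not smooth, and $\tilde S$ need not be smooth or even normal on it, so your clean adjunction must be replaced by adjunction with differents on singular varieties. None of this is supplied, so as written the proposal reduces the lemma to inversion of adjunction for canonical pairs---precisely the nontrivial result being cited---and therefore has a genuine gap. (Two smaller points: the $\alpha_i$ are in general only rational, since $K_X$ and $S$ are merely $\QQ$-Cartier; and you should first dispose of non-canonical centers that are curves $C\ni P$, where $\mult_C S\ge 2$ forces $S$ to be singular along $C$ and hence not Du Val at $P$, before assuming all relevant centers equal $P$.)
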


\begin{proof}
This is well-known, see e.g. \cite[Th. 2.6]{Reid-1980can}.
\end{proof}

\begin{case}
We use the techniques of weighted blowups.
For definitions and basic properties we refer, for example, 
to \cite{Markushevich-1996-discr}, \cite{Reid-YPG1987}.
By fixing coordinates $x_1,\dots,x_n$
we regard the affine space $\CC^n$ as a toric variety.
Let $\alphaa=(\alpha_1,\dots,\alpha_n)$ be a weight
(a primitive lattice vector in the positive octant)
and let $\sigma_{\alpha}\colon \CC^n_{\alphaa}\to \CC^n$
be the weighted blowup with weight $\alphaa$ ($\alphaa$-blowup).
The exceptional divisor $E_{\alphaa}$ is irreducible and 
determines a discrete valuation $v_{\alphaa}$ of the function field 
$\CC(\CC^n)$ such that $v_{\alphaa}(x_i)=\alpha_i$.
\end{case}

\begin{case}
Now let $X\subset \CC^n$ be a hypersurface given by 
the equation $\phi=0$ and let $X_{\alphaa}\subset \CC^n_{\alphaa}$
be its proper transform.
Fix an irreducible component $G$ of $E_{\alphaa}\cap X_{\alphaa}$ 
such that $X_{\alphaa}$ is smooth at the generic point of $G$.
Let $v_G$ be the corresponding discrete valuation of $\CC(X)$.
Write 
\[
E_{\alphaa}\mid_{ X_{\alphaa}}=m_G G+(\text{other components}).
\]
Assume that $d_G=1$ and $G$ is not a toric subvariety 
in $\CC^n_{\alphaa}$. Then the discrepancy of $G$ with respect to $K_X$ 
is computed by the formula
\[
a(G,K_X)=|\alphaa|-1-v_{\alphaa}(\phi), \quad |\alphaa|=\sum \alpha_i,
\]
see \cite{Markushevich-1996-discr}. Let $S\subset X$ be a Cartier divisor and let $\psi$ 
be a local defining equation of $S$ in $\OOO_{0,X}$. 
Then $v_G(\psi)=v_{\alphaa}(\psi)$ and
the discrepancy of $G$ with respect to $K_X+cS$ 
is computed by the formula
\[
a(G,K_X+cS)=a(G,K_X)-cv_{G}(\psi)=|\alphaa|-1-v_{\alphaa}(\phi)-cv_{\alphaa}(\psi).
\]
Therefore,
\[
c \le a(G,K_X)/v_{\alphaa}(\psi)= (|\alphaa|-1-v_{\alphaa}(\phi))/v_{\alphaa}(\psi).
\]
\end{case}

\begin{definition}[cf. {\cite{Markushevich-1996-discr}}]
A weight $\alphaa$ is said to be \textit{admissible} if
$E_{\alphaa}\cap X_{\alphaa}$ contains at least one 
reduced non-toric component.
\end{definition}

\section{Gorenstein case}
In this section we consider the case where $(X\ni P)$ is either smooth or 
an index one singularity.
\begin{lemma}
\label{l-G-1}
If $(X\ni P)$ is smooth, then $c\le 5/6$.
\end{lemma}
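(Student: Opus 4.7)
The plan is to work in local coordinates on $(X,P) = (\CC^3, 0)$ with $\psi \in \OOO_{X,P}$ a defining equation for $S$ and $m := \mult_0 \psi$. Since $X$ is smooth, for every weight $\alphaa$ the weighted blowup $\sigma_\alphaa$ produces an exceptional divisor $E_\alphaa$ with $a(E_\alphaa,K_X) = |\alphaa|-1$ and $v_{E_\alphaa}(\psi) = v_\alphaa(\psi)$, so
\[
c \;=\; \ct(X,S) \;\le\; \frac{|\alphaa|-1}{v_\alphaa(\psi)}.
\]
The goal is to exhibit, for each singularity type of $(S,P)$ compatible with the hypothesis, a weight making this ratio at most $5/6$. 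The preceding lemma forces $(S,P)$ not Du Val, so $m\ge 2$; if $m\ge 3$ the standard $(1,1,1)$-blowup already gives $c \le 2/m \le 2/3$.

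For $m=2$ I analyse $\operatorname{rank} \psi_2$. When $\operatorname{rank}\psi_2\ge 2$, the Morse lemma with parameter exhibits $(S,P)$ as a Du Val $A_n$-singularity, contradicting the hypothesis. Hence $\operatorname{rank}\psi_2=1$, and a linear change of variables brings $\psi$ into the form $\psi = x^2 + f(y,z)$ with $\mu := \mult_0 f \ge 3$. In the sub-case $\mu \ge 4$ the $(2,1,1)$-blowup has $|\alphaa|=4$ and $v_\alphaa(\psi)=4$, which already yields $c \le 3/4$.

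The delicate branch is $\mu = 3$. If the cubic form $f_3$ has more than one root up to scalars, $(S,P)$ is of type $D_n$ and Du Val, contradicting the hypothesis; so $f_3$ is a perfect cube and a further linear change makes $f_3 = y^3$. A Tschirnhaus transformation $y \mapsto y - \tfrac{1}{3} c(z)$ removes the $y^2$-contributions and leaves
\[
\psi = x^2 + y^3 + y\,b_1(z) + b_2(z), \qquad \mult b_1 \ge 3,\ \mult b_2 \ge 4.
\]
The Du Val cases of this shape are precisely $E_6$ ($\mult b_2 = 4$), $E_7$ ($\mult b_1 = 3$), and $E_8$ ($\mult b_2 = 5$ with $\mult b_1 \ge 4$), so the non-Du-Val hypothesis forces $\mult b_1 \ge 4$ and $\mult b_2 \ge 6$. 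For $\alphaa = (3,2,1)$ one then has $|\alphaa|=6$ and
\[
v_\alphaa(\psi) \;=\; \min\bigl(6,\,6,\,2+\mult b_1,\,\mult b_2\bigr) \;=\; 6,
\]
delivering the desired bound $c \le 5/6$.

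The main obstacle is the $\mu = 3$ branch: performing the Tschirnhaus reduction so that the orders of $b_1$ and $b_2$ can be tracked cleanly, and extracting the precise non-Du-Val boundary $\mult b_1 \ge 4$, $\mult b_2 \ge 6$ from the classification of simple hypersurface surface singularities. Sharpness of the bound is visible already at $\psi = x^2+y^3+z^6$, where the $(3,2,1)$-blowup achieves equality $c = 5/6$.
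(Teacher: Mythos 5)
Your proof is correct and follows essentially the same route as the paper: the same trichotomy of normal forms for a non--Du Val double point ($\ord_0\psi\ge 3$; $x^2+\eta(y,z)$ with $\ord_0\eta\ge 4$; $x^2+y^3+\eta$ with the $E_6$, $E_7$, $E_8$ terms excluded) followed by the same three weighted blowups $(1,1,1)$, $(2,1,1)$, $(3,2,1)$ and the identical discrepancy computation. The only difference is presentational: you derive the normal forms and the boundary conditions $\mult b_1\ge 4$, $\mult b_2\ge 6$ by hand where the paper simply cites the Du Val classification \cite[4.25]{Kollar-Mori-19988}, and, like the paper, you implicitly set aside the degenerate non-isolated cases (e.g.\ $\psi=x^2+y^2z$, where $\psi_3$ has a double but not triple root), for which the threshold is at most $1/2$ by blowing up the singular curve.
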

\begin{proof}
Let $c>5/6$.
We may assume that $X=\CC^3$.
Let $\psi(x,y,z)=0$ be an equation of $S$.
Consider a weighted blowup 
$\sigma_{\alphaa}\colon \CC^3_{\alphaa}\to \CC^3$ with a suitable 
weight $\alphaa$. Let $E_\alphaa$ be the exceptional divisor.
Recall that $(S\ni P)$ is not Du Val.
Up to analytic coordinate change there are the following cases
(cf. \cite[4.25]{Kollar-Mori-19988}):

\begin{case} 
\textbf{Case $\ord_0 \psi\ge 3$.}
Take $\alphaa=(1,1,1)$ (usual blowup of $0$).
Then $a(E_{\alphaa}, K_X) =2$, $v_{\alphaa}(\psi)=\ord_0 \psi\ge 3$. Hence 
$c\le a(E_{\alphaa}, K_X)/v_{\alphaa}(\psi) \le 2/3$, a contradiction.
\end{case}

\begin{case} \textbf{Case $\psi=x^2+\eta(y,z)$, where $\ord_0 \eta\ge 4$.}
Take $\alphaa=(2,1,1)$.
Then $a(E_{\alphaa}, K_X) =3$, $v_{\alphaa}(\psi)=4$. Hence 
$c\le a(E_{\alphaa}, K_X) /v_{\alphaa}(\psi)\le 3/4$, a contradiction.
\end{case}

\begin{case} \textbf{Case $\psi=x^2+y^3+\eta(y,z)$, where $\ord_0 \eta\ge 4$.}
Here $\eta$ contains no terms $yz^l$, $l\le 3$ and $z^l$, $l\le 5$
(see, e.g., \cite[4.25]{Kollar-Mori-19988}).
Take $\alphaa=(3,2,1)$.
Then $a(E_{\alphaa}, K_X) =5$, $v_{\alphaa}(\psi)=6$. Hence 
$c\le a(E_{\alphaa}, K_X) /v_{\alphaa}(\psi)= 5/6$, a contradiction.
\end{case}
\end{proof}

\begin{lemma}
\label{l-G-2}
Assume that $(X\ni P)$ is a Gorenstein terminal singularity
and $(X\ni P)$ is not smooth. Then $c\le 4/5$.
\end{lemma}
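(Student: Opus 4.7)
The plan is to mirror the strategy of Lemma~\ref{l-G-1}, now with $X$ a hypersurface in $\CC^4$ rather than $\CC^3$. By Reid's classification, a Gorenstein terminal $3$-fold singularity is compound Du Val, so I would embed $X=\{\phi=0\}\subset(\CC^4,0)$ with $\ord_0\phi=2$. Suppose for contradiction that $c>4/5$. Dealing with the Cartier case first, let $\psi\in\OOO_{\CC^4,0}$ lift a local equation of $S$. The aim is to find an admissible weight $\alphaa=(\alpha_1,\dots,\alpha_4)$ with
\[
c\le\frac{|\alphaa|-1-v_{\alphaa}(\phi)}{v_{\alphaa}(\psi)}\le\frac{4}{5}.
\]

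First I would reduce to $\ord_0\psi=1$: if $\ord_0\psi\ge 2$, the standard blowup $\alphaa=(1,1,1,1)$ already gives $c\le(4-1-2)/2=1/2$. So assume $\psi=x_4$ after a linear change of coordinates, and write $\bar\phi(x_1,x_2,x_3):=\phi(x_1,x_2,x_3,0)$, so that $S$ is cut out in $\{x_4=0\}\cong\CC^3$ by $\bar\phi=0$; by the previous lemma $(S\ni P)$ is not Du Val. The key structural remark is that $\phi=\bar\phi+x_4 A$ with $\ord_0 A\ge 1$ (since $\ord_0\phi=2$ precludes a linear $x_4$-term), so every monomial of $x_4A$ has weight at least $\alpha_4+\min_{i\le 3}\alpha_i$. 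Applying the classification of non-Du Val hypersurface surface singularities used in Lemma~\ref{l-G-1} to $\bar\phi$, in each of the three cases I would extend the surface weight by the smallest $\alpha_4$ ensuring $v_{\alphaa}(x_4A)\ge v_{\alphaa}(\bar\phi)$: $\alphaa=(1,1,1,2)$ for $\ord_0\bar\phi\ge 3$, yielding $c\le 1/2$; $\alphaa=(2,1,1,3)$ for $\bar\phi=x_1^2+\eta$ with $\ord_0\eta\ge 4$, yielding $c\le 2/3$; and $\alphaa=(3,2,1,5)$ for $\bar\phi=x_1^2+x_2^3+\eta$, yielding the tight bound $c\le(11-1-6)/5=4/5$.

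The main obstacles are admissibility and the $\QQ$-Cartier but non-Cartier case. For admissibility, the weight-$v_{\alphaa}(\phi)$ part of $\phi$ equals the corresponding part of $\bar\phi$ plus at most an $x_3x_4$-type perturbation from the linear part of $A$; one must verify that this does not destroy reducedness or force toricity of the relevant component of $E_{\alphaa}\cap X_{\alphaa}$. This is most delicate in the tight case $\alphaa=(3,2,1,5)$, where the weight-$6$ part is $x_1^2+x_2^3+\eta_6(x_2,x_3)+a_3x_3x_4$: monicity in $x_1$ yields reducedness, while the nonvanishing $x_2^3$-coefficient prevents the quadratic $x_1^2-(\cdots)$ from becoming a perfect square, giving irreducibility. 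Terminality of $(X\ni P)$ is needed to exclude degenerate profiles (for instance $\ord_0\bar\phi\ge 4$ together with $\phi_2$ purely in $x_4$ would give $a(G,K_X)=0$, contradicting terminality). Finally, if $S$ is $\QQ$-Cartier but not Cartier, I would replace $\psi$ by a local defining function of $mS$ with $m$ the Cartier index, carrying the extra factor $m$ in the bound $c\le m(|\alphaa|-1-v_{\alphaa}(\phi))/v_{\alphaa}(\psi)$; since the non-Cartier Weil $\QQ$-Cartier divisors at a Gorenstein cDV singularity are described explicitly (via factorizations of $\phi$ in the cA case, and analogously for cD and cE), a case-by-case application of the same weighted-blowup toolkit still delivers $c\le 4/5$.
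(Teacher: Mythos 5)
Your skeleton matches the paper's: reduce to $\psi=x_4$, classify the slice $\bar\phi$ exactly as in Lemma~\ref{l-G-1}, and append a fourth weight to obtain $(1,1,1,2)$, $(2,1,1,3)$, $(3,2,1,5)$; your treatment of the extremal weight $(3,2,1,5)$ (reducedness from monicity in $x_1$, irreducibility from the surviving $x_2^3$) is essentially the paper's computation in the chart $U_z$. But there are two genuine gaps. First, your reduction to $\ord_0\psi=1$ via the weight $(1,1,1,1)$ fails precisely when the quadratic part $\phi_2$ has rank one, which happens for honest $cD$ and $cE$ points (e.g.\ $\phi=x^2+y^3+z^3+t^3$): there $E_{\alphaa}\cap X_{\alphaa}$ is the double coordinate hyperplane $2\{x_1=0\}$, non-reduced and toric, so the formula $a(G,K_X)=|\alphaa|-1-v_{\alphaa}(\phi)$ you invoke does not apply and the weight is not admissible (and $X_{\alphaa}$ may even be singular along that component, so the naive crepant-pullback patch is not automatic). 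The paper avoids this by citing Markushevich's theorem: in suitable coordinates there is an admissible weight $\alphaa$ with a reduced non-toric component $G$ of discrepancy $a(G,K_X)=1$, and then $c>4/5$ forces $v_{\alphaa}(\psi)=1$, hence $\ord_0\psi=1$.

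Second, you locate the only threat to admissibility in the perturbation $x_4A$, but the serious degeneracies come from $\bar\phi$ itself, and they occur: when $\ord_0\bar\phi\ge 3$ the cubic $\eta_3$ can be the cube of a linear form (then $(1,1,1,2)$ is not admissible and the paper must pass to $(2,2,2,3)$), and when $\bar\phi=x_1^2+\xi(x_2,x_3)$ one can have $\xi_4=0$ together with vanishing of the relevant linear coefficients of $\zeta$ (then the paper needs the further weights $(2,1,1,2)$ and $(3,1,1,2)$, plus Reid's terminality criterion to force a non-zero $t^3$-coefficient). These sub-cases occupy most of the actual proof, and your proposal asserts the bounds $1/2$ and $2/3$ in those branches without engaging with them. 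A minor point: the non-Cartier branch you sketch is vacuous, since a $\QQ$-Cartier Weil divisor through a terminal Gorenstein (cDV) point is Cartier (the local class group is torsion-free), which is why the paper simply works with a single defining equation $\psi$ throughout.
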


\begin{proof}
Let $c>4/5$. 
We may assume that $X$ is a hypersurface in $\CC^4$
(it is an isolated cDV-singularity \cite{Reid-1980can}).
Let $\phi(x,y,z,t)=0$ be the equation of $X$.
Since $(X\ni P)$ is a cDV-singularity, 
$\ord_0 \phi=2$. According to \cite{Markushevich-1996-discr},
in a suitable coordinate system $(x,y,z,t)$, there is an admissible weighted blowup
$\sigma_{\alphaa} \colon \CC^4_{\alphaa}\to \CC^4$ such that at least 
for one component $G$ of $E_{\alphaa}\cap X_{\alphaa}$ 
we have $a(G,K_X)=1$. Then $c\le 1/v_{\alphaa}(\psi)$, so $v_{\alphaa}(\psi)=1$.
This means, in particular, that $\ord_0 \psi=1$. 
Up to coordinate change we may assume that
$\psi=t$. Write
\[
\phi=\eta(x,y,z)+t \zeta (x,y,z,t).
\]
Then $S$ is a hypersurface in $\CC^3_{x,y,z}$
given by $\eta(x,y,z)=0$.
As in the proof of Lemma \ref{l-G-1}, 
using Morse Lemma we get the following cases:

\begin{case} \textbf{Case $\ord_0 \eta\ge 3$.}
Take $\alphaa=(1,1,1,2)$. 
By the terminality condition \cite[Th. 4.6]{Reid-YPG1987}, 
we have $4=v_{\alphaa}(xyzt)-1>v_{\alphaa}(\phi)$.
Hence, $v_{\alphaa}(\eta)\le 3$ and $\eta_3\neq 0$.
We claim that $\alphaa$ is 
admissible whenever $\eta_3$ is not a cube of a linear form.
Indeed, in the affine chart $U_x:=\{x\neq 0\}$
the map $\sigma_{\alphaa}^{-1}$ is given by 
\begin{equation}
\label{eq-odd3}
x \mapsto x',\quad y \mapsto y'x',\quad z\mapsto z'x',\quad 
t \mapsto t'x'^2.
\end{equation}
First we assume that $\zeta$ contains the term $x$.
After the coordinate change $x\mapsfrom \zeta (x,y,z,t)$ we obtain
\[
\phi=\eta(x,y,z)+tx.
\]
Using \eqref{eq-odd3} we see that
$E_{\alphaa}\cap X_{\alphaa}$ is given in $\sigma_{\alphaa}^{-1}(U_x)\simeq 
\CC^4$ by 
\[
x'=\eta_3(1,y',z')+t'=0.
\]
Hence $\alphaa$ is admissible, i.e., 
$E_{\alphaa}\cap X_{\alphaa}$ has
a reduced non-toric component $G$. Then 
$a(G, K_X) =1$, 
$v_{G}(\psi)=2$ and $c\le a(G, K_X) /v_{G}(\psi)=1/2$, a contradiction.

Thus by symmetry we may assume that $\zeta$ contains no terms $x$, $y$, $z$.
Since $\ord_0 \phi=2$, $\zeta$ contains $t$. So, 
\[
\phi=\eta(x,y,z)+t^2+ t\xi (x,y,z,t),\quad \ord_0 \xi\ge 2.
\]
As above, $E_{\alphaa}\cap X_{\alphaa}$ is given in $\CC^4$ by 
$x'=\eta_3(1,y',z')=0$.
If $\eta_3$ is not a cube of a linear 
form, then $E_{\alphaa}\cap X_{\alphaa}$ has a reduced non-toric component $G$. 
Then, as above, 
$c\le 1/2$, a contradiction.

Consider the case where $\eta_3$ is a cube of a linear form. 
Then we may assume that 
$\eta_3(x,y,z)=y^3$, so 
\[
\phi=y^3+\eta^{\bullet}(x,y,z)+t^2+ t\xi (x,y,z,t),\quad \ord_0 \xi\ge 2, \quad 
\ord_0\eta^{\bullet} \ge 4.
\]
Put $\alphaa'=(2,2,2,3)$. Again, 
in the affine chart $U_x:=\{x\neq 0\}$
the map $\sigma_{\alphaa'}^{-1}$ is given by 
$x \mapsto x'^2$, $y \mapsto y'x'^2$, $z\mapsto z'x'^2$,
$t \mapsto t'x'^3$, 
where $\sigma_{\alphaa'}^{-1}(U_x)\simeq \CC^4/\muu_2(1,0,0,1)$ and 
\[
E_{\alphaa'}\cap X_{\alphaa'}\cap \sigma_{\alphaa'}^{-1}(U_x)=
\{ 
x'=0,\ y'^3+t'^2 =0\}.
\]
Thus $\alphaa'$ is admissible and for some component $G'$ of 
$X_{\alphaa'}\cap E_{\alphaa'}$ we have
$a(G', K_X)=2$, $v_{G'}(\psi)=3$, $c\le 2/3$, a contradiction.
\end{case}

\begin{case} \textbf{Case $\eta=x^2+\xi(y,z)$, where $\ord_0 \xi\ge 4$.}
By Morse Lemma we may assume that $\zeta$ does not depend on $x$.
Write $\zeta_1=\delta_1y+\delta_2z+\delta_3t$, $\delta_i\in \CC$.
Take $\alphaa=(2,1,1,3)$. 
In the affine chart $U_y:=\{y\neq 0\}$
the map $\sigma_{\alphaa}^{-1}$ is given by 
$x \mapsto x'y'^2$, $y \mapsto y'$, $z\mapsto z'y'$,
$t \mapsto t'y'^3$ and  
\[
E_{\alphaa}\cap X_{\alphaa}\cap \sigma_{\alphaa}^{-1}(U_y)=
\{ y'=0,\ x'^2+\xi_4(1,z')+\delta_1t' +\delta_2t'z'=0\}.
\]
If either $\delta_1\neq 0$ or $\delta_2\neq 0$ or $\xi_4\neq 0$, then 
$E_{\alphaa}\cap X_{\alphaa}$ is reduced (at least over $U_y$). 
Hence, 
$\alphaa$ is 
admissible and for some component $G$ of $E_{\alphaa}\cap X_{\alphaa}$
we have
$c\le a(G, K_X) /v_{G}(\psi)= 2/3$, a contradiction.
Thus $\delta_1=\delta_2= 0$ and $\xi_4=0$. 
Then we can write
\[
\phi=x^2+\xi(y,z)+\delta_3t^2+t\zeta^{\bullet}(y,z,t), \quad \ord_0\xi\ge 5,
\quad \ord_0\zeta^{\bullet}\ge 2.
\]
Take $\alphaa'=(2,1,1,2)$. 
In the affine chart $U_y:=\{y\neq 0\}$
the map $\sigma_{\alphaa'}^{-1}$ is given by 
$x \mapsto x'y'^2$, $y \mapsto y'$, $z\mapsto z'y'$,
$t \mapsto t'y'^2$ and  
\[
E_{\alphaa'}\cap X_{\alphaa'}\cap \sigma_{\alphaa'}^{-1}(U_y)=
\{ y'=0,\ x'^2+\delta_3t'^2+t\lambda(1,z')=0\},
\]
where $\lambda$ is the degree $2$ 
homogeneous part of $\zeta(y,z,0)$.
If $\delta_3\neq 0$ or $\lambda\neq 0$, 
as above, $\alphaa'$ is 
admissible and $c\le 1/2$, a contradiction.
Thus $\delta_3=0$, $\lambda= 0$, and
\[
\phi=x^2+\xi(y,z)+\delta t^3+t\zeta^{\circ}(y,z,t), \quad \delta\in \CC, \quad\ord_0\xi\ge 5,
\quad \ord_0\zeta^{\circ}\ge 3.
\]
Applying the terminality condition \cite[Th. 4.6]{Reid-YPG1987}
with weight $(2,1,1,1)$ we get that $\delta\neq 0$.

Take $\alphaa''= (3,1,1,2)$. As above we get that $\alphaa''$ 
is admissible and then
$c\le 1/2$, a contradiction.
\end{case}

\begin{case} \textbf{Case $\eta=x^2+y^3+\xi(y,z)$, where $\ord_0 \xi\ge 4$.}
Here $\xi$ contains no terms $yz^l$, $l\le 3$ and $z^l$, $l\le 5$
(see, e.g., \cite[4.25]{Kollar-Mori-19988}).
Write $\zeta_1=cz+\ell(x,y,t)$ and 
$\xi=\xi_{(6)}+\xi_{(7)}+\cdots$, where $\xi_{(d)}$ is 
the degree $d$ 
weighted homogeneous part of $\xi$ with respect to $\wt(y,z)=(2,1)$.
Here $\xi_{(6)}$ is a linear combination of $z^6$, $yz^4$,  $y^2z^2$.
Take $\alphaa=(3,2,1,5)$.
In the affine chart $U_z:=\{z\neq 0\}$
the map $\sigma_{\alphaa}^{-1}$ is given by 
$x \mapsto x'z'^3$, $y \mapsto y'z'^2$, $z\mapsto z'$,
$t \mapsto t'z'^5$ and
\[
E_{\alphaa}\cap X_{\alphaa}\cap \sigma_{\alphaa}^{-1}(U_z)=
\{z'=0,\ x'^2+y'^3+\xi_{(6)}(y',1)+\delta t'=0\},
\]
where $\delta$ is a constant and $\xi_{(6)}(y',1)$ contains no $y'^3$.
Hence $\alphaa$ is admissible, i.e., 
$E_{\alphaa}\cap X_{\alphaa}$ has
a reduced non-toric component $G$. Then 
$a(G, K_X) =4$, $v_{G}(\psi)=5$, and
$c\le a(G, K_X) /v_{G}(\psi)\le 4/5$, a contradiction.
\end{case}
\end{proof}

The following examples show that 
bounds $\ct(X,S)\le 5/6$ and $\le 4/5$ in 
Theorem \ref{main-2} are sharp.

\begin{example}
Let $X=\CC^3$ and let $S=S^d$ is given by $x^2+y^3+z^d$, $d\ge 6$.
Then $\ct(\CC^3,S^d)=5/6$. We prove this by descending induction on $\down{d/6}$.
Take $\alphaa=(3,2,1)$ and consider the $\alphaa$-blowup 
$\sigma_{\alphaa}\colon \CC^3_{\alphaa}\to \CC^3$.
Let $S_{\alphaa}\subset X_{\alphaa}$ be the proper transform of $S$.
We have $a(E_{\alphaa}, K_X) =5$ and $v_{\alphaa}(\psi)=6$. Hence, $\ct(\CC^3,S^d)\le 5/6$.
Further,
\[
\textstyle
K_{\CC^3_{\alphaa}}+\frac56S_{\alphaa}=\sigma_{\alphaa}^*(K_{\CC^3}+\frac56S).
\]
Thus it is sufficient to show that $\ct(X_{\alphaa},\frac 56 S_{\alphaa})$ is canonical.
We have three affine charts: 
\begin{itemize}
 \item 
$U_x:=\{x\neq 0\}$. Here $\sigma_{\alphaa}^{-1}\colon$ 
$x \mapsto x'^3$, $y \mapsto y'x'^2$, $z\mapsto z'x'$,
$S_{\alphaa}$ is given in $\sigma_{\alphaa}^{-1}(U_x)\simeq\CC^3/\muu_3(-1,2,1)$ by the equation 
$1+y'^3+z'^dx'^{d-6}=0$. Hence, in this chart, 
$S_{\alphaa}$ is smooth and does not pass through a (unique) singular point
of $\sigma_{\alphaa}^{-1}(U_x)$.
 \item 
$U_y:=\{y\neq 0\}$. Here $\sigma_{\alphaa}^{-1}\colon$ 
$x \mapsto x'y'^3$, $y \mapsto y'^2$, $z\mapsto z'y'$,
$S_{\alphaa}$ is given in $\sigma_{\alphaa}^{-1}(U_y)\simeq \CC^3/\muu_2(3,-1,1)$ by the equation 
$x'^2+1+z'^dy'^{d-6}=0$. Again, in this chart, 
$S_{\alphaa}$ is smooth and does not pass through a (unique) singular point
of $\sigma_{\alphaa}^{-1}(U_y)$.

 \item 
$U_z:=\{z\neq 0\}$. Here $\sigma_{\alphaa}^{-1}\colon$ 
$x \mapsto x'z'^3$, $y \mapsto y'z'^2$, $z\mapsto z'$,
$S_{\alphaa}$ is given in $\sigma_{\alphaa}^{-1}(U_z)\simeq \CC^3$ by the equation 
$x'^2+y'^3+z'^{d-6}=0$. In this chart, $(X_{\alphaa},S_{\alphaa})\simeq (\CC^3, S^{d-6})$.
\end{itemize}
Thus $X_{\alphaa}$ has only terminal singularities,
$S_{\alphaa}$ does not pass through any singular point of 
$X_{\alphaa}$, 
and the pair $(X_{\alphaa}, S_{\alphaa})$ is terminal in
charts $U_x$ and $U_y$. In the chart $U_z$ the pair
by induction $(X_{\alphaa},\frac 56 S_{\alphaa})$ is canonical
(moreover, $(X_{\alphaa}, S_{\alphaa})$ is canonical if $d\le 11$).
Therefore, $\ct(X,S)=5/6$.
\end{example}

\begin{example}
Let $X\subset \CC^4$ is given by 
$x^2+y^3+z^d+tz=0$, $d\ge 7$ and let $S$ cut out by $t=0$.
Take $\alphaa=(3,2,1,5)$ and
consider the $\alphaa$-blowup $\sigma_{\alphaa} \colon X_{\alphaa}\to X$.
Let $S_{\alphaa}\subset X_{\alphaa}$ be the proper transform of $S$.
We see below that $\alphaa$ is admissible.
Moreover, the exceptional divisor $G:=E_{\alphaa}\cap X_{\alphaa}$
is reduced and irreducible.
We have four charts: 
\begin{itemize}
 \item 
$U_x:=\{x\neq 0\}$. Here $\sigma_{\alphaa}^{-1}\colon
x \mapsto x^3$, $y \mapsto yx^2$, $z\mapsto zx$, $t\mapsto tx^5$,
$X_{\alphaa}$ is given in $\sigma_{\alphaa}^{-1}(U_x)\simeq\CC^4/\muu_3(-1,2,1,5)$ by the equation 
$1+y^3+z^dx^{d-6}+tz=0$ and $S_{\alphaa}$ by two equations $x=1+y^3+tz=0$. 
Hence, in this chart, both $X_{\alphaa}$ and
$S_{\alphaa}$ are smooth. 
 \item 
$U_y:=\{y\neq 0\}$. Here
$\sigma_{\alphaa}^{-1}\colon 
x \mapsto xy^3$, $y \mapsto y^2$, $z\mapsto zy$, $t\mapsto ty^5$,
$\sigma_{\alphaa}^{-1}(U_y)\simeq \CC^4/\muu_2(3,-1,1,5)$, 
$X_{\alphaa}=\{x^2+1+z^dy^{d-6}+tz=0\}$, and $S_{\alphaa}=\{y=x^2+1+tz=0\}$. 
As above, both $X_{\alphaa}$ and
$S_{\alphaa}$ are smooth in this chart.
 \item 
$U_z:=\{z\neq 0\}$. Here $\sigma_{\alphaa}^{-1}\colon
x \mapsto xz^3$, $y \mapsto yz^2$, $z\mapsto z$, $t\mapsto tz^5$,
$\sigma_{\alphaa}^{-1}(U_z)\simeq \CC^4$,   
$X_{\alphaa}=\{x^2+y^3+z^{d-6}+t=0\}$, and $S_{\alphaa}=\{z=x^2+y^3+t=0\}$. 
As above, both $X_{\alphaa}$ and
$S_{\alphaa}$ are smooth in this chart.

 \item 
$U_t:=\{t\neq 0\}$. Here $\sigma_{\alphaa}^{-1}\colon
x \mapsto xt^3$, $y \mapsto yt^2$, $z\mapsto zt$, $t\mapsto t^5$,
$\sigma_{\alphaa}^{-1}(U_t)\simeq \CC^4/\muu_5(3,2,1,-1)$,
$X_{\alphaa}=\{x^2+y^3+z^dt^{d-6}+z=0\}$, and $S_{\alphaa}=\{t=x^2+y^3+z=0\}$. 
The variety
$X_{\alphaa}$ has a unique singular point $Q$ at the origin and this point is terminal of type 
$\frac15(3,2,-1)$ the surface
$S_{\alphaa}$ is smooth and does not pass through $Q$. 
\end{itemize}
Thus we have $a(G, K_X) =4$, $v_{\alphaa}(\psi)=5$, and $a(G, K_X+ \frac45 S) =0$.
Therefore, 
\[
\textstyle
K_{X_{\alphaa}}+\frac45S_{\alphaa}=\sigma_{\alphaa}^*(K_{X}+\frac45S).
\]
Since the pair $K_{X_{\alphaa}}+\frac45S_{\alphaa}$ is canonical, $\ct(X,S)=4/5$.
\end{example}

\section{Non-Gorenstein case}
Now we assume that $(X\ni P)$ is a (terminal) point of index $r>1$.
Let $\pi \colon (X^\sharp\ni P^\sharp)\to (X\ni P)$ be the index-one
cover and let $S^\sharp:=\pi^{-1}(S)$.

\begin{lemma}
\label{l-nG-1}
If $(X\ni P)$ is a cyclic quotient singularity, then $\ct(X,S)\le 1/2$.
\end{lemma}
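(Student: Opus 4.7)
The plan is to combine the index-one cover with Kawamata's toric extraction. Write $(X \ni P) \simeq \CC^3 / \muu_r$ with $\muu_r$ acting by weights $\frac{1}{r}(a, -a, 1)$, $\gcd(a,r) = 1$ and $0 < a < r$ (the standard normal form for three-dimensional terminal cyclic quotients), and let $\pi\colon X^\sharp = \CC^3 \to X$ be the index-one cover, $S^\sharp = \pi^{-1}(S)$. Since $\pi$ is étale in codimension one, $\pi^*(K_X + cS) = K_{X^\sharp} + cS^\sharp$, so discrepancies are preserved; in particular $\ct(X, S) = \ct(X^\sharp, S^\sharp)$ and the pair $(X^\sharp, cS^\sharp)$ is not canonical. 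Let $\psi \in \OOO_{\CC^3, 0}$ be an equation of $S^\sharp$; then $\psi$ is $\muu_r$-semi-invariant.

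I would first record the elementary fact that $\ord_0 \psi \ge 2$. Non-canonicity of $(X, cS)$ at $P$ forces $P \in S$, hence $\psi(0) = 0$. If $\psi$ had a nonzero linear part, then $S^\sharp$ would be smooth at the origin and the pair $(\CC^3, cS^\sharp)$ would even be terminal for every $c < 1$, contradicting the non-canonicity just obtained.

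Next I would apply the Kawamata blowup $\sigma\colon Y \to X$, namely the toric divisorial contraction associated to the primitive lattice vector $\frac{1}{r}(a, r-a, 1)$ in the fan of $X$. A standard toric computation gives
\[
a(E, K_X) = \frac{a + (r-a) + 1}{r} - 1 = \frac{1}{r}, \qquad v_E(x_1^{i_1} x_2^{i_2} x_3^{i_3}) = \frac{a i_1 + (r-a) i_2 + i_3}{r}.
\]
Since $rS$ is Cartier with defining function $\psi^r$, I obtain $v_E(S) = v_E(\psi)$, i.e.\ the minimum of $(a i_1 + (r-a) i_2 + i_3)/r$ over the monomials actually appearing in $\psi$. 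Using $a \ge 1$, $r-a \ge 1$, and $\ord_0 \psi \ge 2$, every such monomial satisfies $a i_1 + (r-a) i_2 + i_3 \ge i_1 + i_2 + i_3 \ge 2$, so $v_E(S) \ge 2/r$. Hence
\[
c \;\le\; \frac{a(E, K_X)}{v_E(S)} \;\le\; \frac{1/r}{2/r} \;=\; \frac{1}{2}.
\]

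The only steps that need a line of justification are the equality $\ct(X,S) = \ct(X^\sharp, S^\sharp)$ (standard for finite covers étale in codimension one, via equality of discrepancies) and the identification of the Kawamata valuation $v_E$ with the indicated toric monomial valuation; both are routine in the setting of the preceding sections. No serious obstacle is expected: once $\ord_0 \psi \ge 2$ is in place, the crucial inequality $v_E(\psi) \ge 2/r$ is automatic from $a, r-a \ge 1$, and the bound $c \le 1/2$ follows at once.
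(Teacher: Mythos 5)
There is a genuine gap, and it sits exactly where you flagged the ``only steps that need a line of justification.'' The equality $\ct(X,S)=\ct(X^\sharp,S^\sharp)$ is false in general: under a finite cover \'etale in codimension one the discrepancies transform by $a(E^\sharp,K_{X^\sharp}+cS^\sharp)+1=e\bigl(a(E,K_X+cS)+1\bigr)$, where $e$ is the ramification index. This preserves the log canonical threshold (the level $-1$) but not the canonical threshold (the level $0$): canonicity ascends to the cover but does not descend, so one only gets $\ct(X^\sharp,S^\sharp)\ge\ct(X,S)$. (The paper uses precisely the ascending direction in the proof of Lemma \ref{l-nG-3}.) Consequently your claim that $(X^\sharp,cS^\sharp)$ is not canonical is unjustified, and the conclusion $\ord_0\psi\ge 2$ that you derive from it is actually \emph{false} under the hypotheses of the lemma. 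For example, take $X=\CC^3/\muu_5(2,3,1)$ and $S=\{x_1=0\}/\muu_5$: here $\psi=x_1$, $S^\sharp$ is smooth and $(X^\sharp,S^\sharp)$ is canonical, yet the Kawamata weight $\alphaa=\frac15(2,3,1)$ gives $a(E_{\alphaa},K_X)-v_{\alphaa}(\psi)=\frac15-\frac25<0$, so $(X,S)$ is not canonical and all hypotheses hold with $\ord_0\psi=1$.

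The inequality you actually need, $v_{\alphaa}(\psi)\ge 2/r$, is nevertheless true, but it requires a different justification. Since every coordinate has $\alphaa$-weight at least $1/r$, the only way to have $v_{\alphaa}(\psi)<2/r$ is $v_{\alphaa}(\psi)=1/r$, which forces $\psi$ to contain a single coordinate $x_i$ whose weight in $\frac1r(a,r-a,1)$ equals $\frac1r$ (so $x_3$, or $x_1$ when $a=1$, or $x_2$ when $a=r-1$). In that case $S^\sharp$ is a smooth coordinate hyperplane and the residual $\muu_r$-action on it lies in $SL_2$, so $S\simeq\CC^2/\muu_r(a,-a)$ is a Du Val singularity of type $A_{r-1}$ --- contradicting Lemma 2.2 of the paper, which says $(S\ni P)$ is not Du Val. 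This is exactly the paper's argument. Note that your example also shows why smoothness of $S^\sharp$ alone cannot finish the job: when the linear term has weight $a\not\equiv\pm1$, the quotient $S$ is log terminal but not Du Val, and no contradiction arises; one must instead observe that in that case $v_{\alphaa}(\psi)=a/r\ge 2/r$ already gives the bound. So your toric computation is sound, but the reduction to $\ord_0\psi\ge2$ both rests on a false transfer of canonicity across the index-one cover and proves more than is true.
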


\begin{proof}
By our assumption we have
$X\simeq \CC^3/\muu_r(a,-a,1)$ for some $r\ge2$, $1\le a <r$, $\gcd(a,r)=1$.
Assume that $c=\ct(X,S)> 1/2$. 
Let 
$\psi=0$ be a defining equation of $S^\sharp$.
Consider the weighted blowup $\sigma_{\alphaa}\colon X_{\alphaa}\to X$
with weights $\alphaa =\frac1r (a,r-a,1)$.
Then $a(E_{\alphaa}, K_X)=1/r$.
Since $a(E_{\alphaa}, K_X)-c v_\alphaa (\psi)\ge 0$, we have 
$v_\alphaa (\psi)\le a(E_{\alphaa}, K_X)/c< 2a(E_{\alphaa}, K_X)= 2/r$
and so $v_\alphaa (\psi)=1/r$.
Thus we may assume that $\psi$ contains $x_3$
(if $a\equiv \pm 1$ we possibly have to permute coordinates).
Then $S^\sharp\simeq \CC^2$ is smooth and $S\simeq \CC^2/\muu_r(a,-a)$, i.e.,
$S$ is Du Val of type $A_{r-1}$.
\end{proof}

\begin{lemma}
\label{l-nG-2}
If $(X\ni P)$ is a terminal singularity of index $r>1$ 
and $\ct(X,S)>1/2$, then $K_X+S\sim 0$.
\end{lemma}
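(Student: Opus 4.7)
My plan is to use the ``economic'' extraction with discrepancy $1/r$ to pin down $v_E(S)$, then translate this on the index-one cover into a statement about the $\muu_r$-weight of a defining equation of $S^\sharp$.

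By Kawamata \cite{Kawamata-1992-discr}, for any terminal threefold singularity $(X\ni P)$ of index $r$ there is a divisorial contraction $\sigma\colon Y\to X$ whose exceptional locus is a single prime divisor $E$ with $a(E,K_X)=1/r$. Canonicity of $(X,cS)$ gives $0\le a(E,K_X+cS)=1/r-c\,v_E(S)$, so $v_E(S)\le 1/(rc)<2/r$. Since $\operatorname{Cl}(X,P)\simeq\mathbb{Z}/r\mathbb{Z}$, the divisor $rS$ is Cartier and hence $v_E(S)\in(1/r)\mathbb{Z}$; since $P\in\Supp S$ forces $v_E(S)>0$, this pins down $v_E(S)=1/r$.

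Now pass to the index-one cover $\pi\colon(X^\sharp\ni P^\sharp)\to(X\ni P)$. The local class group of the Gorenstein singularity $X^\sharp$ is torsion-free, so $\pi^*[S]=0$, meaning $S^\sharp:=\pi^{-1}(S)$ is principal: say $S^\sharp=\{\psi=0\}$ where $\psi\in\OOO_{X^\sharp,P^\sharp}$ is a $\muu_r$-semi-invariant of some weight $w\in\mathbb{Z}/r\mathbb{Z}$. The extraction $\sigma$ lifts $\muu_r$-equivariantly to a weighted blowup $\tilde\sigma$ of $X^\sharp$ whose integer weight vector $\beta$ is congruent mod $r$ to the $\muu_r$-weights of the coordinates, and whose exceptional divisor $\tilde E$ has ramification index $r$ over $E$. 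Translating $v_E(S)=1/r$ upstairs gives $v_\beta(\psi)=v_{\tilde E}(\psi)=1$. Since every monomial of $\psi$ of minimal $\beta$-weight carries $\muu_r$-weight $w$, this forces $w\equiv 1\pmod r$.

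Finally, by the definition of ``index $r$'' (\cite{Reid-YPG1987}) a local generator of $\omega_{X^\sharp}$ at $P^\sharp$ is a $\muu_r$-semi-invariant of weight $+1$; consequently, under the identification $\operatorname{Cl}(X,P)\simeq\widehat{\muu_r}$ sending the class of a Weil divisor to the weight of its defining semi-invariant on the cover, $[K_X]\leftrightarrow -1$. Combining with $[S]\leftrightarrow 1$ we obtain $[K_X+S]=0$, i.e., $K_X+S\sim 0$. The main technical hurdle will be the precise identification of $v_E$ on $X$ with the weighted valuation on $X^\sharp$---in particular, verifying the ramification index equals $r$ and matching up the $\muu_r$-weights with $\beta$-weights---which for cyclic quotients is the explicit computation already seen in the proof of Lemma~\ref{l-nG-1}, and in general relies on the explicit form of the Kawamata extractions for each type of index-$r$ terminal singularity in the Mori--Reid classification.
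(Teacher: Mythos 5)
Your proposal is correct and follows essentially the same route as the paper: Kawamata's weighted blowup with discrepancy $1/r$ together with $c>1/2$ forces $v(\psi)=1/r$, hence $\wt\psi\equiv 1\equiv\wt x_3\pmod r$, and comparing with the $\muu_r$-weight of the (anti)canonical generator on the index-one cover identifies $S$ with $-K_X$ in the local class group. The paper packages the last step by writing an explicit residue form $\lambda(\partial\phi/\partial x_4)(dx_1\wedge dx_2\wedge dx_3)^{-1}$ and checking $\wt(x_1x_2x_3x_4)-\wt\phi\equiv\wt x_3$ in both the main series and the $cAx/4$ case, which is exactly the weight bookkeeping you defer to the classification at the end.
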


\begin{proof}
By Lemma \ref{l-nG-1} $(X\ni P)$ is not a cyclic quotient singularity.
There is an analytic $\muu_r$-equivariant 
embedding $(X^\sharp, P^\sharp) \subset (\CC^4,0)$. 
Let $(x_1,x_2,x_3,x_4)$ be coordinates in $\CC^4$, let $\phi=0$ be an 
equation of $X^\sharp$, and let $\psi=0$ be an equation of $S^{\sharp}$. 
We can take $(x_1,x_2,x_3,x_4)$ and $\phi$ 
to be semi-invariants 
such that one of the following holds \cite{Reid-YPG1987}:
\begin{enumerate}
\item[-] 
\textbf{Main series.}
$\wt (x_1,x_2,x_3,x_4; \phi)\equiv (a,-a, 1, 0;0)\mod r$, where $\gcd (a,r)=1$.
\item[-] 
\textbf{Case $cAx/4$.} $r=4$,
$\wt (x_1,x_2,x_3,x_4; \phi)\equiv (1,3, 1, 2;2)\mod 4$.
\end{enumerate}
In both cases $\wt (x_1x_2x_3x_4)- \wt \phi\equiv \wt x_3\mod r$.
According to \cite{Kawamata-1992-discr} there is a weight 
$\alphaa$
such that for the corresponding $\alphaa$-blowup
$\sigma_{\alphaa}\colon X_{\alphaa}\subset W\to X\subset \CC^4/\muu_r$ 
the exceptional divisor $E_{\alphaa}\cap X_{\alphaa}$ 
has a reduced component $G$ of 
discrepancy $a(G, K_X)=1/r$.
Moreover, $r\alpha_i\equiv \wt x_i \mod r$, $i=1,2,3,4$.
Since $c>1/2$, we have $1/r-cv_\alphaa (\psi)\ge 0$, i.e., 
$rv_\alphaa (\psi)< 2$, so $rv_\alphaa (\psi)=1$. 
In particular, $\wt \psi\equiv 1 \mod r$. 

Let $\omega$ 
be a section of $\OOO_X(-K_X)$. Then $\omega$ can be written as 
\[
\omega=\lambda (\partial \phi/\partial x_4)(dx_1 \wedge dx_2\wedge dx_3)^{-1},
\]
where
$\lambda$ is a semi-invariant function with 
\[
\wt \lambda-\wt (x_1x_2x_3x_4)+\wt \phi\equiv \wt\omega \equiv 0\mod r.
\]
Thus, $\wt \psi \equiv \wt \lambda\mod r$. Hence, $S\sim -K_X$.
\end{proof}

\begin{lemma}
\label{l-nG-3}
If $(X\ni P)$ is a terminal singularity of index $r>1$, 
then $c\le 4/5$.
\end{lemma}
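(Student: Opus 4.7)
The plan is to reduce the non-Gorenstein case to Lemma~\ref{l-G-2} via the index-one cover $\pi\colon (X^\sharp,P^\sharp)\to(X,P)$. Suppose for contradiction that $c=\ct(X,S)>4/5$. Since $c>1/2$, Lemma~\ref{l-nG-1} forces $(X\ni P)$ not to be a cyclic quotient, so $(X^\sharp\ni P^\sharp)$ is a non-smooth Gorenstein terminal (compound Du Val) hypersurface singularity in $\CC^4$. By Lemma~\ref{l-nG-2}, $K_X+S\sim 0$ near $P$, hence $S^\sharp:=\pi^{-1}(S)$ is a Cartier divisor on $X^\sharp$ with $S^\sharp\sim -K_{X^\sharp}\sim 0$, cut out by the semi-invariant $\psi$.

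Because $\pi$ is \'etale in codimension one and crepant, pullback preserves discrepancies: for every divisorial valuation $E^\sharp$ on a model of $X^\sharp$ with image $E$ over $X$ we have $a(E^\sharp,K_{X^\sharp}+cS^\sharp)=a(E,K_X+cS)$. Consequently $\ct(X^\sharp,S^\sharp)=\ct(X,S)=c$ and $(X^\sharp,S^\sharp)$ is not canonical, so by the lemma at the start of Section~2, $(S^\sharp\ni P^\sharp)$ is not Du Val. Applying Lemma~\ref{l-G-2} to $(X^\sharp,S^\sharp)$ then yields $c\le 4/5$---the sought contradiction.

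The only remaining subtlety is the integrality hypothesis on $S^\sharp$ in Lemma~\ref{l-G-2}: reducedness follows from $\pi$ being unramified in codimension one, but irreducibility requires work. The analysis of $\psi$ from Lemma~\ref{l-nG-2}---in particular $v_\alphaa(\psi)=1/r$ combined with $\wt\psi\equiv\wt x_3 \mod r$---forces the lowest-weight component of $\psi$ to be, up to a unit in the weight-zero variable $x_4$, the coordinate $x_3$ (or a linear combination with the other extremal coordinate when $a\equiv\pm 1\mod r$). After an analytic change of coordinates one has $\psi=x_3+(\text{higher-weight terms})$, and one then verifies case by case over the terminal subtypes of index $r>1$ (main series cA/r, cD/r, cE/r, and the exceptional cAx/4) that $\psi|_{X^\sharp}$ is irreducible, using the explicit normal forms for the equation $\phi$ of $X^\sharp$. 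In the residual subcase where $\psi|_{X^\sharp}$ does factor, the factors are permuted by $\muu_r$ and $S^\sharp$ acquires a positive-dimensional singular locus along the intersection of its components; a direct weighted-blowup calculation in that situation forces $c$ well below $4/5$, so Lemma~\ref{l-G-2} is not needed.
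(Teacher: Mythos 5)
Your reduction to Lemma~\ref{l-G-2} hinges on the claim that the index-one cover preserves discrepancies exactly, $a(E^\sharp,K_{X^\sharp}+cS^\sharp)=a(E,K_X+cS)$, and hence that $\ct(X^\sharp,S^\sharp)=\ct(X,S)$ and $(X^\sharp,S^\sharp)$ is not canonical. This is false. The covering $\pi$ is \'etale only in codimension one; for a divisor $E$ over $X$ whose centre is $P$ the induced cover $E^\sharp\to E$ may be ramified with index $e>1$, and the correct relation is $a(E^\sharp,\pi^*\Delta)+1=e\,\bigl(a(E,\Delta)+1\bigr)$. This yields only the inequality $\ct(X^\sharp,S^\sharp)\ge\ct(X,S)$, which is strict in general: already in the situation of Lemma~\ref{l-nG-1} one has $S^\sharp=\{x_3=0\}\simeq\CC^2\subset\CC^3$, so $(X^\sharp,S^\sharp)$ is canonical and $\ct(X^\sharp,S^\sharp)=1$, while $\ct(X,S)\le 1/2$. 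Consequently you cannot conclude that $(X^\sharp,S^\sharp)$ fails to be canonical, and Lemma~\ref{l-G-2} --- whose standing hypothesis is precisely that the pair is not canonical --- does not apply. The argument collapses at this point; your subsequent discussion of the irreducibility of $S^\sharp$ is secondary.

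The paper turns this difficulty around rather than fighting it. Assuming $c>4/5$, it uses the correct one-way implication to see that $(X^\sharp,cS^\sharp)$ is canonical, and then invokes Lemma~\ref{l-G-2} in contrapositive form to conclude that $(X^\sharp,S^\sharp)$ itself \emph{is} canonical, whence $(S^\sharp\ni P^\sharp)$ is Du Val. The contradiction is then extracted downstairs: $(S\ni P)=(S^\sharp\ni P^\sharp)/\muu_r$ is log terminal, and Lemma~\ref{l-nG-2} (which you cite but do not use for this purpose) gives $K_X+S\sim 0$, so $K_S$ is Cartier by adjunction; a log terminal surface germ with Cartier canonical class is Du Val, contradicting the standing lemma that $(S\ni P)$ is not Du Val. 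To salvage your approach you would need some mechanism for transporting the failure of canonicity from $(X,S)$ up to $(X^\sharp,S^\sharp)$; the paper instead transports the consequence of canonicity back down to $S$.
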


\begin{proof}
Since $\pi$ is \'etale in codimension one, 
we have $K_{X^\sharp }+cS^\sharp=\pi^*(K_X+cS)$. Hence
the pair $(X^\sharp,\, cS^\sharp)$ is canonical (see, e.g., 
\cite[3.16.1]{Kollar-1995-pairs}). Assume that $c> 4/5$.
By Lemma \ref{l-nG-1} the point $(X^\sharp\ni P^\sharp)$ is singular.
Then by Lemma \ref{l-G-2} the pair 
$(X^\sharp,\, S^\sharp)$ is canonical.
Therefore, $(S^\sharp\ni P^\sharp)$ is a Du Val singularity.
Then the singularity $(S\ni P)=(S^\sharp\ni P^\sharp)/\muu_r$
is log terminal. On the other hand, by Lemma \ref{l-nG-2}
the divisor $K_S$ is Cartier. Hence, $(S\ni P)$ is Du Val, a contradiction.
\end{proof}


\end{document}